\theoremstyle{plain}
\newtheorem{theorem}{Theorem}[section]
\newtheorem{definition}[theorem]{Definition}
\newtheorem{proposition}[theorem]{Proposition}
\begin{document}

\title{ITTMs with Feedback}
\author{Robert S. Lubarsky\\
Florida Atlantic University\\ Dept. of Mathematical Sciences
\\ 777 Glades Rd., Boca Raton, FL 33431, USA\\
Robert.Lubarsky@alum.mit.edu \\}

\maketitle

\begin {abstract}
Infinite time Turing machines are extended in several ways to
allow for iterated oracle calls. The expressive power of these
machines is discussed and in some cases determined.
\end {abstract}

\section {Introduction}
Infinite time Turing machines, or ITTMs, introduced in \cite {HL},
are regular Turing machines that are allowed to run for
transfinitely many steps. The only changes to the standard
definition of a Turing machine that need making are what to do at
limit stages: the head goes to the front of the tape(s), the state
entered is a dedicated state for limits, and the value of each
cell is the lim sup of the previous values.

That introductory paper also discussed various kinds of oracles
computations and corresponding jump operators. One such jump
operator encodes the information ``does the ITTM with index $e$ on
input $r$ converge?" If $e$ is allowed to call an oracle $A$, this
is called the {\bf strong jump} $A^{\blacktriangledown}$ of $A$:
$\{(e,x) \mid \{e\}^A(x)\downarrow\}$. The jump can of course be
used as an oracle itself, and the process iterated: you can, for
instance, ask whether $\{e\}(r)$ converges, where $\{e\}$ can
itself ask oracle questions of simple (non-oracle) ITTMs.

We would like to investigate ultimate iterations of this jump, for
several reasons. Iterations of a procedure can lead to new
phenomena. A well-known example of that in a context similar to
the current one is transfinite iterations of the regular Turing
jump. If you iterate the Turing jump along any well-order that
appears along the way, you get the least admissible set containing
your starting set, admissible computability theory being a quantum
leap beyond ordinary computability theory \cite{B}. Arguably the
next example right after this one would be iterations of inductive
definitions. Admissible set theory is exactly what you need to
develop a theory of positive inductive definitions, the least
fixed point of such being $\Sigma_1$ definable over any admissible
set containing the definition in question (e.g. its parameters)
\cite{B}. If the language of least fixed points of positive
inductive definitions is closed in a straightforward manner, you
end up with the $\mu$-calculus. Determining the sets definable in
the $\mu$-calculus is however anything but a straightforward
extension of admissibility, needing a generalization of the notion
of reflection, gap reflection \cite{L,M,L-}. Something similar
happens with ITTMs, as some of the extensions are quite different
from the base case, as we will see.

A potential application of this work is in proof theory. The
strongest fragment of second-order arithmetic for which an ordinal
analysis has been done to date is $\Pi^1_2$ Comprehension
\cite{R}. Regular (i.e. non-iterated) ITTMs are already more
powerful than that. Perhaps having descriptions of stronger
subsystems of analysis other than the straightforward hierarchy of
$\Pi^1_n$ Comprehension principles will help the proof theorists
make progress.

The goal of this line of inquiry is to examine what kind of
iterations of ITTMs make sense, and to quantify how powerful those
iterations are by characterizing the reals, or what amounts to the
same thing ordinals or sets, that can be so written. This
situation is different from that for regular Turing machines,
because an ITTM computation can halt after infinitely many steps,
and so ITTMs have the power to write reals. Hamkins and Lewis
insightfully classified the reals that come up in this context as
{\bf writable} if they appear as the output of a halting
computation, {\bf eventually writable} if they are eventually the
unchanging content of the output tape for a divergent computation,
and {\bf accidentally writable} if they appear anywhere on any
tape during any ITTM computation, even if they are overwritten
later. The same concepts apply to ordinals, where an ordinal is
writable (resp. eventually, accidentally) if some real coding that
order-type is writable (resp. eventually, accidentally). This
distinction among these kinds of reals and ordinals turned out to
be crucial to their characterization, as announced in \cite {W1}
and detailed in \cite {W2}, with improved proofs and other results
in \cite {W3}. Let $\lambda, \zeta$, and $\Sigma$ respectively be
the suprema of the writable, eventually writable, and accidentally
writable ordinals.

\begin {theorem}
(Welch) $\zeta$ is the least ordinal $\alpha$ such that
$L_{\alpha}$ has a $\Sigma_2$ elementary extension, $L_\lambda$ is
the smallest $\Sigma_1$ substructure of $L_\zeta$, and $L_\Sigma$
is the unique $\Sigma_2$ extension of $L_\zeta$.
\end {theorem}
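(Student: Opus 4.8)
The plan is to route the entire argument through a single \emph{universal} ITTM computation $U$ running on empty input that dovetails all programs and, in effect, enumerates the constructible hierarchy, and to establish a dictionary between the snapshot $S_\alpha$ of $U$ at stage $\alpha$ and the level $L_\alpha$. Concretely, I would first prove the three ``reals equal levels'' correspondences: the writable reals are exactly $\R \cap L_\lambda$, the eventually writable reals are exactly $\R\cap L_\zeta$, and the accidentally writable reals are exactly $\R \cap L_\Sigma$; equivalently, that each snapshot $S_\alpha$ and the level $L_\alpha$ are uniformly interdefinable. Since the relevant levels code, by their reals, exactly the ordinals below themselves, the ordinal statements about $\lambda,\zeta,\Sigma$ as suprema of coded order types reduce to these reals statements, so from here on I work with reals and levels.

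Second, I would prove the dynamical \emph{looping lemma}: being non-halting, $U$ eventually reaches a configuration that recurs, and there is a least pair $(\zeta,\Sigma)$ with $\zeta<\Sigma$ and $S_\zeta = S_\Sigma$, after which the block $(S_\alpha)_{\zeta\le\alpha<\Sigma}$ repeats cofinally through the ordinals. The engine here is the limit rule: at a limit stage $\delta$ each cell takes the $\limsup$ of its earlier values, so a cell is $1$ at $\delta$ iff it is $1$ cofinally below $\delta$, i.e.\ $\forall\beta<\delta\,\exists\gamma\in(\beta,\delta)$ the cell is $1$ at $\gamma$ --- a $\Pi_2$ condition on the history, with ``the cell has stabilized to $0$'' the dual $\Sigma_2$ condition. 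This two-layered structure forces both that no real first appears after stage $\Sigma$ (so $\Sigma$ is the sup of the accidentally writable ordinals) and that the reals present from some stage onward are exactly those present throughout $[\zeta,\Sigma)$ (so $\zeta$ governs the eventually writable reals).

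The heart of the proof, and the step I expect to be the main obstacle, is upgrading the bare recurrence $S_\zeta=S_\Sigma$ to genuine $\Sigma_2$-elementarity $L_\zeta\prec_{\Sigma_2}L_\Sigma$. Using the dictionary that $S_\alpha$ is uniformly $\Sigma_1$-definable over $L_\alpha$ while the limit rule is exactly the $\Sigma_2/\Pi_2$ operation identified above, I would show that a $\Sigma_2$ statement $\exists u\,\forall v\,\varphi(u,v,p)$ with parameter $p\in L_\zeta$ holds in $L_\Sigma$ iff a witness $u$ together with the verification of $\forall v\,\varphi$ is ``seen'' somewhere in the repeating block $[\zeta,\Sigma)$ and hence already reflects into $L_\zeta$; the reverse, upward preservation, is the easy direction by persistence of the $\Sigma_1$ matrix. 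The delicate point is to match the single quantifier alternation of $\Sigma_2$ with the two-layered cofinal-appearance/eventual-stability structure of the $\limsup$: one must guarantee that the existential witness genuinely recurs in the loop and that the universal quantifier is not satisfied only spuriously at the recurring snapshot. This is precisely what choosing $U$ so that its configuration is uniformly $\Sigma_1$ over $L_\alpha$, with the looping itself captured $\Sigma_2$-definably, is designed to make work.

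With the translation in hand the three clauses assemble as follows. For minimality of $\zeta$: given any $\Sigma_2$ elementary extension $L_\alpha\prec_{\Sigma_2}L_\beta$ with $\beta>\alpha$, the $\Sigma_2$-definability of the configuration transfers $S_\beta$ back to $S_\alpha$ and forces $U$ into a recurring configuration at stage $\alpha$; since $(\zeta,\Sigma)$ is the least recurrence this gives $\alpha\ge\zeta$, while $L_\zeta\prec_{\Sigma_2}L_\Sigma$ shows $\zeta$ itself qualifies. Uniqueness of the target: any $L_\zeta\prec_{\Sigma_2}L_\delta$ makes $\delta$ a recurrence point of the same loop whose reals must then be exactly the accidentally writable ones, pinning $\delta=\Sigma$. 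Finally, for $L_\lambda\prec_{\Sigma_1}L_\zeta$ with $\lambda$ least: a $\Sigma_1$ fact over $L_\zeta$ with writable parameters is witnessed at some construction stage, and the halting dynamics let me replace an eventually writable witness by a genuinely writable one, yielding $\Sigma_1$-elementarity; that no smaller level works follows because below $\lambda$ the writable reals are precisely the outputs of halting computations, so no proper initial level is $\Sigma_1$-closed in $L_\zeta$. The smallest $\Sigma_1$ substructure of $L_\zeta$ is its $\Sigma_1$-Skolem hull of $\emptyset$, which is transitive and hence equals $L_\lambda$, completing the identification.
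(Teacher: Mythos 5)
Your overall architecture (a universal machine $U$, recurrence of its snapshot at a least pair $(\zeta,\Sigma)$, stabilization being a $\Sigma_2$ property) matches the standard development, and your minimality argument --- that any $L_\alpha\prec_{\Sigma_2}L_\beta$ forces $U$'s snapshot at $\alpha$ to recur at $\beta$, since cell-stabilization is $\Sigma_2$ and a stabilization point is $\Pi_1$ --- is exactly the argument the paper gives. The genuine gap is in what you yourself flag as the heart of the proof: upgrading the bare recurrence $S_\zeta=S_\Sigma$ to $L_\zeta\prec_{\Sigma_2}L_\Sigma$ via the ``dictionary.'' The snapshot of $U$ is a single real; its equality at $\zeta$ and $\Sigma$ is a fact about countably many cells of one machine and cannot by itself carry the full $\Sigma_2$ theory of $L_\Sigma$ with arbitrary parameters from $L_\zeta$. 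Your sentence that a witness ``is `seen' somewhere in the repeating block $[\zeta,\Sigma)$ and hence already reflects into $L_\zeta$'' is precisely the statement to be proved, and no mechanism for that reflection is supplied: the dictionary yields the implication (elementarity $\Rightarrow$ snapshot equality) that you use for minimality, but not its converse. What actually closes this gap in the paper (reproducing Welch's argument) is a further explicit machine construction, not a definability transfer: if a $\Pi_2$ sentence $\phi$ holds in $L_\zeta$ but fails in $L_\Sigma$, the set of $\xi\le\Sigma$ with $L_\xi\models\phi$ is closed, hence has a maximum $\xi$, and $\zeta\le\xi<\Sigma$; one then dovetails a universal machine $u$ (which accidentally writes every ordinal below $\Sigma$) with a machine $p$ eventually writing the parameter, and a master machine $e$ that writes $\xi'$ on its output whenever it finds $L_{\xi'}\models\phi$ with the current parameter and $\xi'$ exceeding the current output. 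Once $p$ has settled and the maximal $\xi$ has appeared, the output is $\xi$ forever, so $\xi\ge\zeta$ is eventually writable --- contradicting that every eventually writable ordinal is strictly below $\zeta$. Without this (or an equivalent) construction, your central step does not go through.

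A secondary problem is circularity in your first step. The identifications ``eventually writable reals $=\R\cap L_\zeta$'' and ``accidentally writable reals $=\R\cap L_\Sigma$'' are not available as prior lemmas: the inclusions that eventually writable outputs have stabilized by stage $\zeta$, and that nothing new ever appears on any tape after $\Sigma$, are themselves proved together with (and partly by means of) the looping and reflection arguments above. Your later uses of them --- e.g.\ pinning down uniqueness of the $\Sigma_2$ extension by saying its reals ``must then be exactly the accidentally writable ones'' --- lean on exactly the facts whose proofs are being organized, so the lemma ordering you propose cannot be carried out as stated.
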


The relativization of this theorem to a real parameter holds
straightforwardly.

In the next section, we give some notions of the syntax and
semantics of these iterations fundamental to what follows. The
three after that each gives a different kind of extension of
ITTMs, and about as much as is currently know about them. Some are
characterized pretty fully, others only to the point where it's
clear that there's something very different going on. The final
section offers a generalization of the semantics.

\section {Feedback ITTMs and the Tree of Subcomputations}

A {\bf feedback ITTM} ({\bf FITTM}) is an ITTM with two additional
tapes, and an additional state, which is the oracle query ``does
the feedback ITTM with program the content of the first additional
tape on input the content of the second converge?" Clearly, the
additional tapes are merely an expository convenience, as they
could be coded as dedicated parts of the original tape.

The semantics of feedback ITTMs is defined via the tree of
subcomputations. The idea is that the tree keeps track of oracle
calls by having each one be a child of the calling computation.
This tree is in general not itself ITTM computable. Rather, it is
defined within ZF, even if a fragment of ZF would suffice,
inductively on the ordinals. At every ordinal stage, each extant
node is labeled with some computation, and control is with one
node.

At stage 0, control is with the root, which we think of as at the
top of the downward growing subcomputation tree. The root is
labeled with the index (and input, if any) of the main
computation.

At a successor stage, if the node currently in control is in any
state other than the oracle call, action is as with a regular
Turing machine. If taking that action places that machine in a
halting state, then, if there is a parent, the parent gets the
answer ``convergent" to its oracle call, and control passes to the
parent. If there is no parent, then the current node is the root,
and the computation halts. If the additional step does not place
the machine in a halting state, then control stays with the
current node. If the current node makes an oracle call, a new
child is formed, after (to the right of) all of its siblings,
labeled with the calling index and parameter; a new machine is
established at that node, with program the given index and with
the parameter written on the input tape; and control passes to
that node.

At a limit stage, there are three possibilities. One is that on
some final segment of the stages there were no oracle calls, and
so control was always at one node. Then the rules for limit stages
of regular ITTMs apply, and the snapshot of the computation at the
node in question is determined (where the snapshot includes all of
the current information about the computation -- the state, the
tape contents, and so on). If that snapshot repeats an earlier
one, then that computation is divergent. (Here we are using the
standard convention, first articulated in \cite{HL}, that a
snapshot qualifies as repeating only if it guarantees an infinite
loop. In point of fact, a snapshot might be identical to an
earlier one, which guarantees that it will recur $\omega$-many
times, but it is possible that at the limit of those snapshots, we
escape the loop. So by convention, a repeating snapshot is taken
to be one that guarantees that you're in a loop.) At that point,
if there is a parent, then the parent gets the answer ``divergent"
to its oracle call, and control is passed to the parent. If there
is no parent, then the node in question is the root, and the
entire computation is divergent.

A second possibility is that cofinally many oracle calls were
made, and there is a node $\rho$ such that cofinally many of those
calls were $\rho$'s children. Note that such a node must be
unique. Then $\rho$ was active cofinally often, and again the
rules for regular ITTMs at limit stages apply. If $\rho$ is seen
at that stage to be repeating, then control passes to $\rho$'s
parent, if any, which also gets the answer that $\rho$ is
divergent; if $\rho$ is the root, then the main computation is
divergent. If $\rho$ is not seen to be repeating at this stage,
then $\rho$ retains control and the computation continues.

The final possibility is that, among the cofinally many oracle
calls made, there is an infinite descending sequence, which is the
right-most branch of the tree. This is bad. It is troublesome, at
best, to define what to do at the next step. Various ways to avoid
this last situation are the subject of the next sections.

\section {Pre-Qualified Iterations}
The problem cited above is that the subcomputation tree has an
infinite descending sequence. The most obvious way around that is
to ensure that that does not happen, that the tree is
well-founded. That can be enforced by attaching an ordinal to each
node of the tree and requiring that children of a node have
smaller ordinals.

That is in essence what is done with the strong jump
$\emptyset^\blacktriangledown$ of \cite{HL}.
$\emptyset^\blacktriangledown$ is $\{ (e,x) \mid e(x)\downarrow
\}$, which is the same thing as labeling the root of the
subcomputation tree with 1, so none of its children, the oracle
calls, can themselves make oracle calls. In unpublished work, Phil
Welch has show that $\zeta^{\emptyset^\blacktriangledown}$ is the
smallest $\Sigma_2$-extendible limit of $\Sigma_2$-extendibles,
and that $\lambda^{\emptyset^\blacktriangledown}$ and
$\Sigma^{\emptyset^\blacktriangledown}$ are such that
$L_{\lambda^{\emptyset^\blacktriangledown}}$ is the least
$\Sigma_1$ substructure of
$L_{\zeta^{\emptyset^\blacktriangledown}}$, which is itself a
$\Sigma_2$ substructure of
$L_{\Sigma^{\emptyset^\blacktriangledown}}$.

We would like to generalize this to ordinals as large as possible,
certainly to ordinals greater than 1. An {\bf ordinal oracle ITTM}
is an FITTM with not two but three additional tapes. On the third
tape is written a real coding an ordinal $\alpha$. The oracle
calls allowed are about other ordinal oracle ITTMs, and on the
third tape must be written some ordinal $\beta < \alpha$. Since
one of the other tapes is for parameter passing, it is unimportant
just how the ordinals are written on the latest tape. With this
restriction, the third outcome above can never happen, and all
computations are well-defined (as either convergent or divergent).

An {\bf iterated ITTM}, or {\bf IITTM}, is an FITTM that may make
an oracle call about any ordinal oracle ITTM writing on the third
tape any ordinal at all. So an IITTM is like an ordinal oracle
ITTM only the length of the ordinal iteration is not fixed in
advance. Rather, it is limited only by what the machine figures
out to write down.

\begin {definition}
$\lambda^{it}, \zeta^{it},$ and $\Sigma^{it}$ are the respective
suprema of the ordinals writable, eventually writable, and
accidentally writable by IITTMs.
\end {definition}

\begin {definition}
An ordinal $\alpha$ is \begin {itemize} \item 0-extendible if it
is $\Sigma_2$ extendible, \item $\beta+1$-extendible if it is a
$\Sigma_2$ extendible limit of $\beta$-extendibles, and
\item $\gamma$-extendible ($\gamma$ a limit) if it is $\Sigma_2$
extendible and a limit of $\beta$-extendibles for each $\beta <
\gamma$.
\end {itemize}

\end {definition}

As pointed out by the referee, the limit clause actually works
perfectly well for all three clauses.

The definition above relativizes to any parameter $x$. The
corresponding notation is for $\alpha$ to be
$\beta[x]$-extendible. Notice that, in the limit case, when
$\gamma < \alpha$, $\alpha$ is also the limit of ordinals which
are themselves limits of $\beta$-extendibles for each $\beta <
\gamma$.

\begin {theorem}
For ordinal oracle ITTMs with ordinal $\alpha$ coded by the input
real $x_\alpha$ and parameter $y$, the supremum $\zeta$ of the
eventually writable ordinals is the least
$\alpha[x_\alpha,y]$-extendible. Moreover, the supremum $\Sigma$
of the accidentally writable ordinals is such that
L$_\Sigma[x_\alpha,y]$ is the (unique) $\Sigma_2$ extension of
L$_\zeta[x_\alpha,y]$, and the supremum $\lambda$ of the writable
ordinals is such that L$_\lambda[x_\alpha,y]$ is the smallest
$\Sigma_1$ substructure of L$_\zeta[x_\alpha,y]$. Finally, the
writable (resp. eventually, accidentally) reals are those in the
corresponding segment of L$[x_\alpha,y]$.
\end {theorem}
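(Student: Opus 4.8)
The plan is to argue by induction on $\alpha$, with the base case $\alpha = 0$ being exactly the relativization of Welch's theorem to the parameters: an ordinal oracle ITTM with ordinal $0$ can make no oracle calls, since a call would require writing some $\beta < 0$, and $0$-extendibility is by definition $\Sigma_2$-extendibility. The engine driving the whole argument is a correspondence, to be set up first, between ordinal oracle computations and $\Sigma_1$/$\Sigma_2$ reflection inside the relativized constructible hierarchy $L[x_\alpha, y]$. Concretely, I would fix a universal $\alpha$-machine $U$ that dovetails all $\beta$-ordinal-oracle computations for $\beta \le \alpha$ on all inputs, run $U$ via the tree of subcomputations, and verify that the global snapshot of $U$ at any stage $\xi$ is an element of $L[x_\alpha,y]$ that is uniformly $\Sigma_1$-definable from $\xi$ and the parameters; the well-foundedness of the subcomputation tree, which is guaranteed precisely for ordinal oracle machines, is what makes this recursion on $\xi$ legitimate.

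The key lemma, proved as part of the induction, is that an oracle call about a $\beta$-computation (so $\beta < \alpha$) is answered correctly by evaluating convergence inside $L_{\tau_\beta}[x_\beta, \cdot\,]$, where $\tau_\beta$ denotes the least $\beta$-extendible relative to the parameters. By the inductive hypothesis this $\tau_\beta$ is exactly the $\zeta$-value for the $\beta$-oracle machines, so the $\alpha$-machine effectively gains $\Sigma_2$-reflective access to every $\tau_\beta$ with $\beta < \alpha$. With this in hand I would identify $\zeta$ with $\tau_\alpha$, the least $\alpha$-extendible, by two inequalities. For $\zeta \le \tau_\alpha$: eventual writability of an ordinal is a $\Sigma_2$ event, and since $\tau_\alpha$ is $\Sigma_2$-extendible, say $L_{\tau_\alpha}[x_\alpha,y] \prec_{\Sigma_2} L_{\tau'}[x_\alpha,y]$, nothing new can stabilize on the output tape at or beyond $\tau_\alpha$; this pins the first global stabilization pair $(\zeta,\Sigma)$ of $U$ at level $\tau_\alpha$. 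For $\zeta \ge \tau_\alpha$: I would show that $U$ simulates the $L[x_\alpha,y]$-construction and eventually writes a code for every level $L_\delta[x_\alpha,y]$ with $\delta < \tau_\alpha$, using the lower oracles to escape the loop at each $\beta$-extendible stage below $\tau_\alpha$; this is where the defining clause of $\alpha$-extendibility, that $\tau_\alpha$ is a limit of $\beta$-extendibles for every $\beta < \alpha$, is used to guarantee that the ascent does not halt prematurely.

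From the placement of the stabilization pair the remaining clauses follow by the now-standard fine-structural bookkeeping. A real is accidentally writable iff it appears on some tape during $U$, which by the $\Sigma_1$-definability of snapshots means iff it lies in $L_\Sigma[x_\alpha,y]$; a convergent computation halts at a writable (hence $<\lambda$) stage, giving writable reals $=$ reals of $L_\lambda[x_\alpha,y]$; and a stable output of a divergent computation, being a $\Sigma_2$ feature, gives eventually writable reals $=$ reals of $L_\zeta[x_\alpha,y]$. The relation $L_\lambda[x_\alpha,y] \prec_{\Sigma_1} L_\zeta[x_\alpha,y]$ is the usual observation that a $\Sigma_1$ truth of $L_\zeta[x_\alpha,y]$ has a witness that can be searched out by a halting computation and hence reflects into the writable levels, so $L_\lambda[x_\alpha,y]$ is the smallest $\Sigma_1$ substructure; $L_\zeta[x_\alpha,y] \prec_{\Sigma_2} L_\Sigma[x_\alpha,y]$ comes from the stabilization pair, and uniqueness of the $\Sigma_2$ extension follows because $\Sigma$ is forced to be the least ordinal beyond $\zeta$ realizing the repeated snapshot.

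The step I expect to be the main obstacle is controlling the interplay between the $\limsup$ rule at ITTM limit stages and the three-case limit behavior of the subcomputation tree, so as to prove that the first global stabilization of $U$ lands exactly at $\tau_\alpha$. The delicate points are, first, verifying that at a limit stage the ``active node'' analysis leaves the global snapshot $\Sigma_1$-definable over $L[x_\alpha,y]$, so that no spurious loop is detected before $\tau_\alpha$ and none is missed at it; and second, handling the boundary at the top of the iteration, where an $\alpha$-machine sitting below a stage that is $\beta$-extendible for cofinally many $\beta < \alpha$ must be shown unable to escape using any single lower oracle. This last point is precisely what makes $\tau_\alpha$, rather than some smaller $\Sigma_2$-extendible, the true value of $\zeta$.
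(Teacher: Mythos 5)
Your proposal matches the paper's proof in all essentials: induction on $\alpha$ with relativized Welch as the base case, correctness of $\beta$-oracle answers inside $L[x_\alpha,y]$ secured by the induction hypothesis, the lower bound $\zeta \geq \tau_\alpha$ obtained by using the lower oracles (applied to eventually-written parameters) to force closure under $\beta$-extendibility, the upper bound $\zeta \leq \tau_\alpha$ from the fact that stabilization of a cell is a $\Sigma_2$ (and, past a fixed stage, $\Pi_1$) assertion, and the $\lambda$ and $\Sigma$ clauses via Welch's $\Sigma_1$-witness-search and $\Pi_2$-reflection arguments. The only cosmetic difference is that you frame the lower bound as a direct climb of the $L$-hierarchy up to $\tau_\alpha$, whereas the paper shows that the supremum $\zeta$ of the eventually writable ordinals is itself $\alpha[x_\alpha,y]$-extendible (a $\Sigma_2$-extendible limit of $\beta$-extendibles) and hence at least the least such; these are the same argument read in opposite directions.
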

\begin {proof}
By induction on $\alpha$.

$\alpha = 0$: This is the relativized version of Welch's theorem
cited above.

$\alpha = \beta + 1$: Let $\gamma$ be any ordinal less than
$\zeta$. Run some machine which eventually writes $\gamma$.
Dovetail that computation with the following. Simulate running all
ordinal oracle ITTMs with input $\beta$ and as parameters the
output of the first machine, which is eventually $\gamma$, and
$y$. This is essentially running a universal machine: clear
infinitely many cells on the scratch tape, split them up into
countably many infinite sequences, and on the $i^{th}$ sequence
run a copy of the $i^{th}$ machine. For each of those simulations,
keep asking whether the current output will ever change. (That is,
ask whether the computation that continues that simulation until
the output tape changes, at which point it halts, is convergent.)
This is a legitimate question for the oracle, as $\beta < \alpha$.
Whenever you get the answer ``no," indicate as much on a dedicated
part of the output tape. Eventually you will get all and only the
indices of the eventually stable computations. So the least
$\beta[x_\alpha,y]$-extendible ordinal is less than $\zeta$, and
so $\zeta$ is the limit of such.

Because of this closure under $\beta$-extendibility,
L$_\zeta[x_\alpha,y]$ can run correctly the computation of the
ordinal oracle ITTMs with input $\beta$. So the rest of the proof
-- that the computations of eventually writable reals stabilize by
$\zeta$, and that the eventually writable reals form a $\Sigma_2$
substructure of the accidentally writables and a $\Sigma_1$
extension of the writables -- follows by the same arguments
pioneered in \cite{W2} and improved upon in \cite {W3}. In order
to keep this paper self-contained, and to verify that the new
context here really makes no difference, we present these
arguments here.

Suppose, toward a contradiction, that L$_\zeta[x_\alpha,y]$
satisfies some $\Pi_2$ sentence $\phi$, but L$_\Sigma[x_\alpha,y]$
does not. By the nature of $\Pi_2$ sentences, the set of ordinals
$\xi \leq \Sigma$ such that L$_\xi[x_\alpha,y] \models \phi$ is
closed, and so contains its maximum. By hypothesis, that maximum
is strictly less than $\Sigma$. Take some machine that
accidentally writes each of the ordinals less than $\Sigma$. A
universal machine will do, for instance, so we will call this
machine $u$. We also need a machine, say $p$, which eventually
writes the $\phi$'s parameter. It is safe to assume that there is
only one parameter, as finitely many can be combined into one set
by pairing. If no parameter is necessary, then $\emptyset$ as a
dummy parameter can be used. Our final machine, call it $e$, runs
$p$ and $u$ simultaneously. It takes the output of $u$ and uses it
to generate the various L$_\xi[x_\alpha,y]$s. When it finds such a
set modeling $\phi$, with parameter the current output of $p$, it
compares $\xi$ to the current content of the output tape. If the
current content is an ordinal greater than or equal to $\xi$,
nothing is written and the computation continues. Else $\xi$ is
written on the output. Eventually the output of $p$ settles down.
Once that happens, when the largest such $\xi$ ever appears, it
will be so written, after which point it will never be
overwritten, making $\xi$ eventually writable. This is a
contradiction.

Regarding $\lambda$, suppose L$_\zeta[x_\alpha,y]$ satisfies some
$\Sigma_1$ formula $\psi$ with parameters from
L$_\lambda[x_\alpha,y]$. Consider the computation which first
computes the parameters using a halting computation, then runs a
machine which eventually writes a witness to $\psi$ and halts when
it finds one. This is a halting computation for such a witness.

By the foregoing, $\zeta$ is $\alpha[x_\alpha,y]$-extendible. That
it is the least such is ultimately because the assertion that any
particular cell in a computation stabilizes is $\Sigma_2$. In
detail, let $\zeta_\alpha$ be the least
$\alpha[x_\alpha,y]$-extendible ordinal and $\Sigma_\alpha$ its
$\Sigma_2$ extension. Since stabilization is a $\Sigma_2$
assertion, any computation has the same eventually stable cells at
$\zeta_\alpha$ as at $\Sigma_\alpha$. Moreover, if $\delta$ is a
stage beyond which a certain cell is stable in $\zeta_\alpha$, the
assertion that that cell beyond $\delta$ is stable is $\Pi_1$, so
that same $\delta$ is also a stabilization point in
$\Sigma_\alpha$. So the snapshot of a computation at
$\zeta_\alpha$ is that same at $\Sigma_\alpha$, and all looping
has occurred by then.

$\alpha$ a limit: Since ordinal oracle ITTMs with input $\alpha$
subsume those with input $\beta < \alpha$, $\zeta$ is
$\beta[x_\alpha,y]$-extendible for each $\beta < \alpha$, and
hence, considering successor $\beta$s, a limit of
$\beta[x_\alpha,y]$-extendibles. The rest follows as above.
\end {proof}

\begin {theorem} $\zeta^{it}$ is the least $\kappa$ which is
$\kappa$-extendible, $\lambda^{it}$ its smallest $\Sigma_1$
substructure, and $\Sigma^{it}$ its (unique) $\Sigma_2$ extension.
\end {theorem}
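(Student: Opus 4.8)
The plan is to treat this as the self-referential analogue of the preceding theorem: an IITTM is precisely an ordinal oracle ITTM whose iteration length is chosen by the computation itself rather than being fixed in advance, so the relevant closure ordinal ought to be the diagonal fixed point $\kappa$, the least $\kappa$ that is $\kappa$-extendible. First I would record that IITTM computations are genuinely well-defined: the root is a single FITTM process whose children are ordinal oracle ITTM calls, each of which spawns a well-founded subtree, and the root never recurses into itself, so the whole subcomputation tree is well-founded and every computation is convergent or divergent. Hence $\lambda^{it},\zeta^{it},\Sigma^{it}$ are meaningful, and the goal splits into showing that $\zeta^{it}$ is $\zeta^{it}$-extendible (whence $\zeta^{it}\ge\kappa$) and that $L_\kappa$ runs every IITTM computation correctly with all looping completed by stage $\kappa$ (whence $\zeta^{it}\le\kappa$).

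For the lower bound I would bootstrap using the preceding theorem level by level. Fix any eventually writable $\beta<\zeta^{it}$ and dovetail exactly as in the successor case there: eventually write a code for $\beta$, and with that code on the third tape simulate all ordinal oracle ITTMs at level $\beta$, repeatedly asking the (now legitimate, since any ordinal may be written) oracle whether each simulated output will ever change again. This harvests precisely the eventually writable ordinals of level $\beta$, whose supremum is the least $\beta$-extendible $\zeta_\beta$; relativizing the simulation to a code for an arbitrary eventually writable $\delta<\zeta^{it}$ produces a $\beta$-extendible in the interval $(\delta,\zeta^{it})$. Since the eventually writable ordinals are cofinal in $\zeta^{it}$ and higher extendibility implies lower, letting $\delta$ range cofinally yields, for \emph{every} $\beta<\zeta^{it}$, that $\zeta^{it}$ is a limit of $\beta$-extendibles. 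The $\Sigma_2$-extendibility of $\zeta^{it}$ is obtained by the same $\Pi_2$-reflection argument as in the preceding theorem, via the machine that searches the $L_\xi$ for the largest one modeling a given $\Pi_2$ sentence, using a universal machine for accidental writing and a separate machine for the parameter. Combining, $\zeta^{it}$ is $\Sigma_2$-extendible and a limit of $\beta$-extendibles for all $\beta<\zeta^{it}$, i.e.\ $\zeta^{it}$ is $\zeta^{it}$-extendible, so $\zeta^{it}\ge\kappa$.

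For the upper bound I would show that $L_\kappa$ decides correctly every oracle call arising in an IITTM computation. Because $\kappa$ is $\kappa$-extendible it is $\beta$-extendible for each $\beta<\kappa$, so by the preceding theorem every level-$\beta$ ordinal oracle ITTM computation with $\beta<\kappa$ stabilizes below $\kappa$, and its convergence is computed correctly inside $L_\kappa$. The delicate point, and the step I expect to be the main obstacle, is that reals coding ordinals $\ge\kappa$ are accidentally writable (as $\Sigma^{it}>\kappa$), so a priori the root process might query the oracle about a level $\beta\ge\kappa$ that neither $L_\kappa$ nor even the extension $L_{\Sigma^{it}}$ can resolve. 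The crux lemma is therefore that, up to the stage at which the root's snapshot begins to loop, every ordinal appearing as the complete third-tape content at a query state is $<\kappa$. I would prove this by induction on computation stages, showing that within the correct $L_\kappa$-simulation the reals genuinely present at a query stage $\sigma<\kappa$ code ordinals already available by stage $\sigma$, hence $<\kappa$, while reals coding ordinals $\ge\kappa$ only materialize in the $\Sigma_2$-extension at stages past the loop point and so cannot be fed to a productive call beforehand.

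Granting the crux lemma, the rest is the snapshot argument of the preceding theorem transported to the root process: stabilization of any cell of the root is a $\Sigma_2$ assertion while the corresponding stabilization stage is $\Pi_1$, so since $\kappa$ is $\Sigma_2$-extendible to its unique extension, the root's snapshot at $\kappa$ agrees with its snapshot in that extension and all looping has occurred by $\kappa$; thus every eventually writable ordinal is $<\kappa$ and $\zeta^{it}\le\kappa$. With $\zeta^{it}=\kappa$ in hand, the identifications of $\lambda^{it}$ and $\Sigma^{it}$ follow verbatim from the preceding theorem: a $\Sigma_1$ fact true in $L_\kappa$ with writable parameters is witnessed by a halting IITTM computation, exhibiting $L_{\lambda^{it}}$ as the smallest $\Sigma_1$ substructure of $L_\kappa$, and the maximal-$\Pi_2$-model argument exhibits $L_{\Sigma^{it}}$ as the unique $\Sigma_2$ extension of $L_\kappa$.
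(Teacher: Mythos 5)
Your overall architecture is the same as the paper's. The paper proves this theorem in two lines: for every $\alpha<\zeta^{it}$ the level-$\alpha$ ordinal oracle machines are themselves (simulable by) IITTMs, so $\zeta^{it}$ is a limit of $\alpha$-extendibles, and then ``the rest follows as above,'' i.e.\ by the $\Pi_2$-reflection argument, the $\Sigma_1$-witness argument, and the $\Sigma_2$/$\Pi_1$ stabilization argument from the preceding theorem's proof. Your lower bound (dovetailed simulation of level-$\beta$ machines, relativized to eventually writable $\delta$ to get cofinality below $\zeta^{it}$) and your identifications of $\lambda^{it}$ and $\Sigma^{it}$ are exactly that reduction, carried out in more detail.

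The divergence is your crux lemma, and that is where your proof has a genuine gap. Write $S$ for the $\Sigma_2$-extension of $L_\kappa$. First, the lemma's scope is circular: ``up to the stage at which the root's snapshot begins to loop'' presupposes that the loop point is $\kappa$, which is what the upper bound is supposed to establish; the non-circular content is only that calls made at stages $<\kappa$ are at levels $<\kappa$ (provable by induction on stages, using closure of $\kappa$ under the relativized operations $(\beta,z)\mapsto\Sigma_\beta[z]$). Second, and more seriously, that content is not sufficient for the stabilization argument you then invoke to finish. That argument transfers $\Sigma_2$ stabilization assertions between $L_\kappa$ and $L_S$, and for those assertions to be about the \emph{actual} computation, the true run must be correctly definable inside $L_S$ all the way up to stage $S$; hence you must control the oracle calls made during the interval $[\kappa,S)$, not merely those made before the loop point. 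But that interval is exactly where the danger lives: if the theorem holds, then $\kappa<\Sigma^{it}$, so by downward closure of accidentally writable ordinals a code for $\kappa$ itself appears on some tape at a stage in $[\kappa,S)$, and a machine can place that code on the third tape and query the oracle — a call at level $\geq\kappa$, whose answer is determined only far above $S$. Your remark that reals coding ordinals $\geq\kappa$ ``cannot be fed to a productive call beforehand'' addresses the interval before the loop, which was never the problematic one. As written, then, the upper bound $\zeta^{it}\leq\kappa$ is not established: you need either a lemma controlling (or neutralizing) calls at levels $\geq\kappa$ made during $[\kappa,S)$, or a different route to the identity of the snapshots at $\kappa$ and at $S$. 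To be fair, the paper's ``follows as above'' silently passes over the same point — the preceding, fixed-cap theorem never faces it because there the bound on query levels is imposed syntactically — but naming the obstacle and then mis-locating it does not close the gap.
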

\begin {proof}
For every $\alpha < \zeta^{it}$, the ordinal oracle ITTMs with
input $\alpha$ are also IITTMs. Hence the least
$\alpha$-extendible is $\leq \zeta^{it}$, and $\zeta^{it}$ is a
limit of $\alpha$-extendibles. The rest, again, follows as above.
\end {proof}

\section {Freezing Computations}
Another way to deal with the possible ill-foundedness of the
subcomputation tree is not to worry about it. That is, while no
steps are taken to rule out such computations, there will be some
with perfectly well-founded subcomputation trees, even if only by
accident. We remain positive, and focus our attention on those,
where we have a well-defined semantics, including whether a
computation converges or diverges. So we can define the reals
writable, eventually writable, and accidentally writable by
FITTMs.

\begin {proposition} Every feedback eventually writable real is
feedback writable.
\end {proposition}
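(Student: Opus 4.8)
The plan is to exploit the feedback oracle to detect exactly when the output tape of an eventually-writing computation has settled, thereby converting a divergent computation into a halting one. Fix a feedback computation $M$ whose subcomputation tree is well-founded and whose output tape is eventually constantly equal to the target real $r$; let $t_0$ be least such that the output is constantly $r$ from stage $t_0$ on. I would build a new feedback machine $M^*$ that simulates $M$ step by step and, at each simulated stage, consults the oracle to ask whether the output of $M$ will ever change again.

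Concretely, for a snapshot $\sigma$ of $M$, let $R_\sigma$ be the computation that is initialized to $\sigma$ and then continues $M$'s computation, halting precisely when the content of the output tape first differs from its content at $\sigma$. Then $R_\sigma$ converges iff $M$'s output changes at some stage past $\sigma$, and diverges iff the output is constant from $\sigma$ onward. The machine $M^*$ carries out the simulation of $M$ (making $M$'s own oracle calls as its own subcomputations in order to continue the simulation faithfully), and at each simulated step, with current snapshot $\sigma$, asks the oracle whether $R_\sigma$ converges. On the answer ``convergent'' it advances the simulation; on the answer ``divergent'' it copies the current output of $M$ to its own output tape and halts.

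First I would verify the halting behaviour. By minimality of $t_0$, for every snapshot $\sigma$ at a stage below $t_0$ the output is not constant from $\sigma$ onward, so $R_\sigma$ converges and $M^*$ correctly declines to halt; in particular $M^*$ does not halt prematurely even at an earlier snapshot whose output happens to equal $r$. The simulation inside $M^*$ therefore reaches the snapshot $\sigma_0$ at stage $t_0$, where $R_{\sigma_0}$ diverges, the oracle answers ``divergent,'' and $M^*$ halts with output exactly $r$. Thus $r$ is feedback writable.

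The main obstacle will be confirming that $M^*$ is a legitimate computation in the present (``freezing'') semantics, i.e. that its subcomputation tree is well-founded, so that the third (ill-founded) limit case never arises for it. The key observation is that $M^*$ makes cofinally many oracle calls, but these are all siblings — children of the root $M^*$ — rather than a descending chain. Each child is either one of $M$'s own oracle calls or a query $R_\sigma$; the subtree below such a child mirrors a tail of $M$'s subcomputation tree, which is well-founded by hypothesis on $M$. Consequently any infinite descending branch through $M^*$ would have to descend inside one such child's subtree, which is impossible, so $M^*$'s tree is well-founded. The limit-stage behaviour of $M^*$ itself is governed by the second case of the limit rules, since $M^*$ regains control cofinally often, and the simulation then proceeds through all stages of $M$ exactly as in the standard argument that one ITTM simulates another.
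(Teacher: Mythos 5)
Your proposal is correct and is essentially the paper's own proof: simulate the eventually-writing computation, repeatedly ask the feedback oracle whether the output tape will ever change again (a question whose subcomputation tree is well-founded because the simulated machine's is), and halt the moment the answer is ``divergent.'' The only differences are cosmetic --- you query at every simulated step rather than only when the output changes, and you spell out the well-foundedness of the new machine's tree in more detail than the paper does.
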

\begin {proof}
Let $e$ be a computation which writes a feedback eventually
writable real. Consider an alternative computation which runs $e$
on a dedicated part of the tapes. Every time $e$'s output tape
changes, the main computation asks the oracle: ``Consider the
computation which begins at the current snapshot of $e$, and
continues $e$'s computation until the output tape changes once
more, and then halts. Does that converge or diverge?" Since $e$'s
tree of subcomputations is well-founded, so is that of the oracle
call, and the oracle call will return a definite answer. If that
answer is ``converge," then the construction continues; if
``diverge", then the construction halts. By hypothesis, this
computation eventually halts, at which point $e$'s output is
written on the output tape.
\end {proof}
Even worse:
\begin {proposition} Every feedback accidentally writable real is
feedback writable.
\end {proposition}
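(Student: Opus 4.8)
The plan is to mirror the construction of the previous proposition, running the witnessing computation and using the oracle to \emph{freeze} at the right moment, except that the moment to be caught is now the appearance of a value that may later be overwritten. Fix a feedback computation $c$ with well-founded subcomputation tree during which $r$ appears; by simulating $c$ and copying the relevant tape region onto a dedicated output region, I may assume that $r$ appears on the output tape, and I let $\xi$ be the least stage at which it does so. If I can arrange a halting computation that runs $c$ for exactly $\xi$ steps, reads off the output region, and halts, then $r$ is feedback writable. So the whole problem reduces to clocking the first stage of appearance of $r$.

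First I would dispose of the easy case, where $r$ is the eventual value of the output region, that is, its content on a final segment of $c$'s run. This is handled exactly as in the previous proposition: run $c$, and each time the output region changes, ask the oracle whether the continuation that runs until the next change (and then halts) converges; well-foundedness of $c$'s tree guarantees a definite answer, and the first ``divergent'' answer certifies that the current value is permanent, so I freeze and output it. Asking the analogous query about recurrence rather than change -- ``does $c$, continued from here, ever again display the current value?'' -- lets the machine recognize when a value occurs for the last time, so every value whose set of appearance stages is bounded is caught at its final appearance.

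The hard part will be to capture a genuinely transient $r$, one that is overwritten and never reappears, because a transient value affords the machine no intrinsic marker by which to recognize the stage $\xi$, and the program, being finite, can have neither $r$ nor a code for $\xi$ built into it. To get past this I would have the machine reconstruct and search the well-ordered sequence of snapshots of $c$ directly: using the oracle to detect limit stages and to certify, for a candidate clockable ordinal $\eta$, that the rerun of $c$ has reached stage $\eta$, the machine clocks its way up the length of $c$ and, via a novelty query (``is the current content of the output region one that has occurred at no earlier stage of this run?''), freezes at the stage where the $\kappa$-th distinct value first appears. Every distinct appearing real is obtained this way for some ordinal $\kappa$, so $r$ is feedback writable provided its index $\kappa$ in this enumeration is itself feedback writable.

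That last proviso is where the real strength of feedback must be used, and it is the crux I expect to fight with: I must show that the least appearance stage $\xi$ of any appearing real is a feedback-writable ordinal, equivalently that the suprema of the writable and of the accidentally writable ordinals coincide. The leverage is that, because every subcomputation tree in sight is well-founded, each question of the form ``has the rerun of $c$ reached stage $\eta$'', ``is this value novel'', or ``does this level of $L[x]$ satisfy this formula'' reduces to a convergence question that the oracle answers; iterating such questions along the well-founded tree lets a halting feedback computation climb and certify levels of $L$ well past $\lambda$, collapsing the accidentally writable ordinals onto the writable ones. Making this climb precise -- in particular verifying that the enumeration index $\kappa$ of each appearing real is produced by a \emph{halting} computation and not merely an eventually writable one -- is the step I would expect to demand the most care.
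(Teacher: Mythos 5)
There is a genuine gap. Your reduction is the right one, and it matches the paper's opening move: it suffices to produce, by a \emph{halting} feedback computation, a real coding an ordinal at least as large as the stage at which $r$ appears (the paper uses the looping time of the divergent computation, so that every real ever appearing is caught; note that once such a code is written, the position of the relevant stage inside the code is an \emph{integer}, which a finite program can carry as a constant -- this is how the paper escapes the objection you raise that neither $r$ nor a code for $\xi$ can be built in, and it makes your detour through a ``$\kappa$-th distinct value'' enumeration and the writability of $\kappa$ unnecessary). But everything then hinges on actually writing that ordinal by a halting computation, and at exactly this point your argument stops: the final paragraph names the goal (``collapsing the accidentally writable ordinals onto the writable ones''), asserts that iterated oracle queries and climbing levels of $L$ will achieve it, and defers the verification. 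Since ``every accidentally writable ordinal is writable'' is essentially the proposition itself restated for ordinals, what you have written is a reduction of the proposition to itself plus a conjecture that the reduction can be discharged; the discharge is the entire mathematical content, and it is missing.

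For comparison, the paper's proof of this crux is a concrete construction, not an $L$-climbing argument: first the machine finds the first looping snapshot of the divergent computation $e$ by asking, at every simulated stage, whether the run continued from the current snapshot ever reproduces that snapshot; then it writes a real coding the looping ordinal by assigning integers to ordinal stages. The key device is to use oracle calls to sort the cells into those that change boundedly often versus cofinally often in the relevant interval, and then to interleave two kinds of steps: even steps advance the simulation until the next boundedly-changing cell has changed for the last time (a fact the oracle can certify, after converting the query into a genuine converge/diverge question), and odd steps dovetail through the cofinally-changing cells; this yields an $\omega$-sequence of marked stages cofinal in the target ordinal, and recursing on each interval between consecutive marked stages terminates after $\omega$ rounds by well-foundedness of the ordinals. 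Note also that your ``novelty query'' (``has this value occurred at an earlier stage?'') is not on its face a convergence question: a subcomputation rerunning $c$ from the start and halting upon seeing the value always halts, since the value occurs at the current stage; bounding the rerun strictly below the current stage is exactly the kind of stage-marking problem the paper's integer-assignment construction exists to solve, so you cannot assume it as an oracle primitive.
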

\begin {proof}
Suppose $e$ is a divergent computation. As in \cite{HL}, $e$ then
has to loop, and does so already at some countable stage. The
sledgehammer way to see that is that there are only set-many
possible snapshots, so if a computation never halts then it has to
repeat itself. As to why that would happen at some countable
stage, that follows from Levy absoluteness. More concretely, the
argument in \cite{HL} for regular ITTMs applies unchanged in the
current setting. There are only countably many cells. So only
countably many stop changing beneath $\aleph_1$. Moreover, there
is some countable bound $\alpha$ by which those have all stopped
changing. List the remaining cells in an $\omega$-sequence $c_0,
c_1, ...$. Let $\alpha_0$ be the least stage beyond $\alpha$ at
which $c_0$ changes. Inductively, let $\alpha_n$ be the least
stage beyond $\alpha_{n-1}$ by which all of $c_0, c_1, ..., c_n$
have changed since stage $\alpha_{n-1}$. The configuration at
stage $\alpha_\omega = \lim_n \alpha_n$ repeats unboundedly
beneath $\aleph_1$, and so is a looping stage.

Let $\alpha$ be such that $e$ has already started to loop by
$\alpha$ many steps. Suppose we could write (a real coding)
$\alpha$ via a halting computation. Then any real written at any
time during $e$'s computation would be writable, via the program
``write $\alpha$, then compute $e$ for the number of steps given
by the integer $n$ in the coding of $\alpha$, then output
whatever's on $e$'s tapes then" (with the desired choice of $n$,
of course). So it suffices to write the looping time of a
computation.

First we determine the first looping snapshot of the machine. At
every stage of the computation in a simulation of $e$, the oracle
is asked: ``Consider the computation that begins with the current
snapshot of $e$, saves it on a dedicated part of the tape, and
continues with a simulation of $e$ on a different part of the
tape, halting whenever the original snapshot is reached again;
does this computation halt?" If the answer is ``no," the
simulation continues. Eventually the answer will be ``yes." That
is the first looping snapshot. (Actually, as pointed out in
\cite{HL}, that's not quite right. A snapshot can repeat itself,
which would then force it to repeat $\omega$-many times, but the
limit could be unequal to that repeating snapshot, and so this
loop could be escaped. The constructions here could be modified
easily enough to avoid this problem.)

The next thing to do would be to write the ordinal number of steps
it took to get to that looping snapshot, and the ordinal number of
steps it would take to make one loop, and then to add them. Since
those ordinals are constructed the same way, we will describe only
how to do the second.

During the construction, we will assign integers to ordinals in
such a way that the $<$-relation will be immediate. The
construction will take $\omega$-many stages, during each of which
we will use up countably (or finitely) many integers, so
beforehand assign to each $n \in \omega$ countably many integers
disjointly to be available at stage $n$. Furthermore, each integer
has its own infinite part of the tape for its scratchwork.

Let $C_i$ ($i \in \omega$) be the (simulated) $i^{th}$ cell of the
tape on which we're running (the simulation of) $e$. We will need
to know which cells change value cofinally in the stage of
interest (the return of the looping stage) and which don't. So
simulate the run of $e$ from the looping stage until its
reappearance. Every time $C_i$ changes value, toggle the $i^{th}$
cell on another dedicated tape from 0 to 1 to 0. At the end of the
computation, the $i^{th}$ cell on the dedicated tape will be 0 iff
$C_i$ changed value boundedly often; so it will be 1 iff $C_i$
changed value cofinally often.

Stage 0 starts in the looping snapshot, and is itself split into
$\omega$-many steps. Those steps interleave consideration of the
cells that changed boundedly often and those that change
cofinally. At step $2i$ continue the computation until the
$i^{th}$ cell with bounded change stops changing. That can be
determined by asking the oracle whether the cell in question
changes before the looping snapshot reappears. While this is not a
converges-or-diverges question on the face of it, since the
computation converges in any case (either when the cell changes or
when the looping snapshot is reached, whichever happens first),
one of those outcomes can be changed to a trivial loop, so that
the question is a standard oracle call. If the answer is ``yes,"
then continue the computation until the answer becomes ``no,"
which is guaranteed to happen. At that point, use an available
integer to mark that ordinal stage, which integer is then larger
in the ordinal ordering than all other integers used so far. Also
write the current snapshot in that integer's scratchwork part of
the tape. Then proceed to the next step, $2i+1$.

At step $2i+1$, we will consider not just the $i^{th}$ cofinally
changing cell, but also the $j^{th}$ such for all $j \leq i$, for
purposes of dovetailing. Sequentially for each $j$ from 0 to $i$,
go to the next stage at which the $j^{th}$ cofinally changing cell
changes value again. After doing so for $i$, use an available
integer to mark that ordinal stage, which integer is then larger
in the ordinal ordering than all other integers used so far. Also
write the current snapshot in that integer's scratchwork part of
the tape. Then proceed to step $2(i+1)$.

Because stage 0 consists of $\omega$-many steps, each of which
picks out only one integer in an increasing sequence, it picks out
a strictly increasing $\omega$-sequence of ordinals. The limit of
that ordinal sequence is the ordinal in the computation at which
its looping snapshot reappears. That's because by then we're
beyond the ordinal at which any cell with boundedly many changes
will change again, thanks to the even steps, and those cells with
cofinal changing change cofinally in that ordinal, thanks to the
dovetailing in the odd steps.

To summarize, we have produced an $\omega$-sequence cofinal in the
ordinal at which the looping snapshot reappears. Inductively,
suppose at stage $i>0$ we have an integer assignment, with $<$, to
a subset of $e$'s ordinal stage, as well as a picture of the
snapshot of the computation each at such stage of the computation.
Then for each integer which is a successor in this partial
assignment, replicate the construction above with the starting
snapshot being the snapshot of $e$ at the predecessor and the
ending snapshot being the snapshot of $e$ at the integer under
consideration. By the well-foundedness of the ordinals, this
process ends after $\omega$-many stages.

\end {proof}

It is easy to see that the feedback writable reals are those
contained in the initial segment of L given by the feedback
writable ordinals, which are also the FITTM clockable ordinals. We
call the set of these ordinals $\Lambda$.

This result removes the basis of the analysis used in weaker forms
of ITTM computation. It comes about because the divergence of a
computation in this paradigm can be determined convergently by a
computation of the same type. Why doesn't this run afoul of some
kind of diagonalization result? The answer is that there's no
universal machine! That is, the computations and oracle calls used
in the proofs above were sometimes convergent and sometimes
divergent, but conveniently they were in any case all
well-defined: the tree of subcomputations was well-founded. If it
is not, we have no semantical notion of how the computation should
continue or what the outcome should be. This notion is captured in
the following.

\begin {definition} A computation is {\bf freezing} if its tree of
subcomputations is ill-founded.
\end {definition}

\begin {proposition} There is no FITTM computation which decides
on an input $e$ whether the $e^{th}$ FITTM is freezing.
\end {proposition}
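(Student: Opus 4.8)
The plan is to argue by diagonalization, in the spirit of the classical unsolvability of the halting problem, exploiting the fact that freezing is precisely a property of the shape of the subcomputation tree. Suppose, toward a contradiction, that some FITTM $D$ decides freezing: on every input $e$, the computation $D(e)$ halts and outputs ``yes'' if the $e$th FITTM is freezing and ``no'' otherwise. Note first that such a $D$ is itself everywhere halting and everywhere non-freezing: to return a definite answer it must complete its computation, so its own tree of subcomputations is well-founded on each input. Consequently $D$ may be \emph{simulated} as a subroutine inside another FITTM without contributing any ill-foundedness, since a simulation of a halting computation is a halting, well-founded activity that adds only a subtree of bounded height.

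Next I would build the diagonal machine $M$. Using the recursion theorem for FITTMs (which holds here exactly as for plain ITTMs, because the enumeration and parametrization arguments of \cite{HL} survive the addition of the feedback tapes and the oracle-query state), fix an index $m$ for a machine $M$ that behaves as follows: it first simulates $D(m)$ to completion; if $D(m)$ answers ``yes'' (declaring the $m$th FITTM freezing), then $M$ immediately halts; if $D(m)$ answers ``no'', then $M$ issues a single oracle call about the $m$th FITTM, that is, about $M$ itself.

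The contradiction then follows from a two-case analysis, using that the $m$th FITTM is $M$. If $D(m)$ answers ``yes'', then by construction $M$ halts, so its subcomputation tree is finite in height and well-founded, and $M$ is \emph{not} freezing, contradicting the correctness of $D$. If instead $D(m)$ answers ``no'', then $M$ makes the self-referential oracle call; this spawns a child node running a fresh copy of $M$, which again simulates $D(m)$, deterministically obtains ``no'' again, and hence makes the same self-call, spawning a grandchild, and so on. After $\omega$ generations this produces an infinite descending branch --- exactly the troublesome third alternative at limit stages described in the definition of the subcomputation tree --- so the tree of $M$ is ill-founded and $M$ \emph{is} freezing, again contradicting $D$. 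Since $D$ is total it must return one of the two answers, so either way we reach a contradiction, and no such $D$ exists.

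The main obstacle I anticipate is the bookkeeping needed to confirm that the two outcomes line up as claimed: one must verify that the self-call genuinely yields an honest infinite descending sequence of nodes (the right-most branch), rather than a computation that halts or escapes at some limit, and, conversely, that the internal simulation of $D$ contributes only a well-founded subtree, so that the freezing status of $M$ is controlled \emph{solely} by whether the self-call is ever made. Establishing the recursion theorem in the feedback setting and checking that the determinism of $M$ forces every node along the descending branch to behave identically are the technical points to nail down; once they are in place the diagonal closes. It is worth emphasizing that this argument does not conflict with the earlier propositions of this section precisely because those constructions kept their subcomputation trees well-founded, whereas here the diagonalization deliberately manufactures ill-foundedness, which is exactly why there is no universal FITTM.
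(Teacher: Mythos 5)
Your proof is correct and takes essentially the same approach as the paper: the paper's entire proof is the one-line remark that a decider would let you ``diagonalize against the non-freezing computations, for a contradiction,'' and your argument is a fully worked-out diagonalization of exactly that kind (including the key observations the paper leaves implicit --- that a decider must itself be everywhere non-freezing, and that self-reference is available via the recursion theorem). The only point of flavor worth noting is that you close the diagonal by flipping the freezing property itself (halt when told ``freezing,'' spawn an infinite chain of self-calls when told ``not freezing''), which is a perfectly valid, arguably the most direct, realization of the diagonalization the paper gestures at.
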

\begin {proof} If there were, you could diagonalize against the
non-freezing computations, for a contradiction.
\end {proof}

We expect that as with most models of computation, the key to
understanding what's computable will be an analysis of the
uncomputable. While the freezing computations do not have an
output or even a divergent computation, they are perfectly
well-defined up until the point when an oracle call is made about
a freezing subcomputation. For that matter, on the tree of
subcomputations, that freezing subcomputation generates a good
tree underneath it, until it calls its own freezing
subcomputation. More generally, even for a freezing computation,
its subcomputation tree, albeit ill-founded, is well-defined.
Hence the following definition makes sense.

\begin {definition} A real is {\bf freezingly writable} if it
appears anywhere on a tape during a freezing computation or any of
its subcomputations.
\end {definition}

We expect that the role that the eventually and accidentally
writable reals played in the understanding of the writable reals
for basic ITTMs will be played here by the freezingly writable
reals. In any case, it should be of interest to understand better
the freezing computations. Centrally, what does the subcomputation
tree of a freezing computation look like? Since the computation
cannot continue once an infinite path through the tree develops,
that infinite path is unique, and is the right-most path. So each
of the $\omega$-many levels on the tree has width some successor
ordinal. For each freezing computation $e$, let $\lambda^e_n$ be
the width of level $n$ of $e$'s subcomputation tree. For a fixed
$e$, there are three possibilities for the $\lambda^e_n$s:

a) $\lambda^e_n$ is bounded beneath $\Lambda$.

b) $\lambda^e_n$ is cofinal in $\Lambda$.

c) Some $\lambda^e_n$ is greater than $\Lambda$.

Option a) is simply unavoidable: it is a simple task to write a
machine which immediately makes an oracle call about itself,
producing a subcomputation tree of order-type $\omega^*$ ($\omega$
backwards).

Options b) and c), as it turns out, are incompatible with each
other. To see this, first note that if c) holds for some
computation, then $n$ can be chosen to be 1 (level 0 consisting of
the root alone). After all, if this is not the case for some given
$e$, let $e_1$ be some computation that halts at a stage larger
than $\max_{m<n}\lambda^e_m$. Use $e_1$ to write $e_1$'s run-time
(using methods like those in the main proposition above). Use that
ordinal to run $e$ substituting for the oracle calls an explicit
computation until the right-most node on level $n-1$ (of $e$'s
original subcomputation tree) becomes active. That is the node
which has more than $\Lambda$-many children, and which is now the
root node of the tree of this modified computation.

Now assume we have indices $e_b$ and $e_c$ of types b and c
respectively (and $\lambda^{e_c}_1 > \Lambda$). Simulate $e_c$.
Whenever an oracle call is made, write the new length of the top
level in the subcomputation tree (using techniques as above). Use
that ordinal to simulate the computation of $e_b$ substituting
explicit computation for oracle calls and building explicitly the
subcomputation tree. Whenever the run of $e_b$ demands an ordinal
greater than that provided by $e_c$ yet, break off the former
computation and return to the latter. By hypothesis, at a certain
point you will be able see that $e_b$'s subcomputation tree is
ill-founded. Then write $\sup_{n \in \omega} \lambda^{e_b}_n$, and
halt. This would then be a halting computation of $\Lambda$,
contradiction.

Unfortunately, we do not know which of b) or c) is excluded. For
that matter, there could be no examples of either! Possibly all
freezing computations are of type a), where those bounds over all
freezing $e$s are cofinal in $\Lambda$.

\section {Parallel Oracle Calls}
With sequential computation, as defined above, once an ill-founded
oracle call is made, the entire computation is freezing. Parallel
computation provides an alternative. In its essence, this is the
same as with finite computation. In that setting, what should be
the semantics of ``A or B"? That both converge and one is true, or
that one is true regardless of whether the other even converges?
Similarly here, a machine could make a parametrized oracle call.
This is perhaps most easily modeled by having another tape as part
of the oracle call. The called computation asks for the
convergence of a computation with index given on the first tape
and inputs the second and third tapes. When making a call, the
third tape is blank, but in generating the answer, the oracle
substitutes all possible finite strings (equivalently: all
integers) on the blank tape. If any return a convergent
computation, the oracle answers ``yes." If none of them freeze and
all return a divergent computation, the oracle answers ``no." If
at least one of the parallel calls freezes and all those that do
not diverge, then the oracle gives no answer and the current
computation freezes.

Notice that the roles of convergence and divergence could be
interchanged here, as convergent and divergent computations can be
interchanged with each other: given $e$, ask the oracle whether
$e$ converges; if yes, diverge, if no, halt. Of course, if $e$
freezes, so does this.

Arguments similar to those above show that the parallel writable,
parallel eventually writable, and parallel accidentally writable
reals are all the same.

Although it seems likely, we do not have a proof that the parallel
writable reals include strictly more than the feedback writables
do.

\section {Extending Convergence and Divergence Consistently}
For both (sequential) feedback and parallel computation above, the
semantics was given conservatively. That is, the
convergence/divergence answers to oracle calls were forced on us.
Evidence for such was an explicit computation in which some tree
was well-founded, as so is absolute. Once well-foundedness is
brought into the picture, induction cannot be too far behind. In
fact, the process can be described via an inductive definition.

Let $\downarrow$ and $\uparrow$ be a disjoint pair of sets of
computation calls, where a computation call is a pair consisting
of (an index for) a program and a parameter. Given ($\downarrow,
\uparrow$), computations can be defined as convergent or divergent
relative to that pair. For the sake of concreteness we will
restrict attention to feedback computation; analogous
considerations apply to parallel computation. When making oracle
calls, the given pair ($\downarrow, \uparrow$) is used as the
oracle. This is deterministic, as $\downarrow$ and $\uparrow$ are
disjoint. It is also monotonic: any computation that asks only
oracle calls already in $\downarrow$ or $\uparrow$ will be
unaffected by increasing either or both of those; all other
computations are freezing, and so can only thaw by increasing
those. As a monotonic operator, it has a least fixed point. This
is the semantics given for feedback computation, that is the sense
in which the semantics was conservative. This description of the
matter does allow for considering other fixed points as possible
semantics for these computational languages.

\begin {thebibliography} {99}
\bibitem{B} Jon Barwise, {\bf Admissible Sets and Structures},
Perspectives in Mathematical Logic, Springer, 1975
\bibitem{HL} Joel Hamkins and Andy Lewis, ``Infinite Time Turing
Machines," {\bf The Journal of Symbolic Logic}, v. 65 (2000), p.
567-604
\bibitem{L-} Robert Lubarsky, ``Building Models of the
$\mu$-calculus," unpublished
\bibitem{L} Robert Lubarsky, ``$\mu$-definable Sets of Integers,"
{\bf Journal of Symbolic Logic}, v. 58 (1993), p. 291-313
\bibitem{M} Michael M\"ollerfeld, ``Generalized Inductive
Definitions," Ph.D. thesis, Universit\"at zu M\"unster, 2002
\bibitem{R} Michael Rathjen, ``Recent Advances in Ordinal
Analysis," {\bf The Bulletin of Symbolic Logic}, v. 1 (1995), p.
468-485
\bibitem{W1} Philip Welch, ``The Length of Infinite Time Turing
Machine Computations," {\bf The Bulletin of the London
Mathematical Society}, v. 32 (2000), p. 129-136
\bibitem{W2} Philip Welch, ``Eventually Infinite Time Turing
Machine Degrees: Infinite Time Decidable Reals," {\bf The Journal
of Symbolic Logic}, v. 65 (2000), p. 1193-1203
\bibitem{W3} Philip Welch, ``Characteristics of Discrete Transfinite
Turing Machine Models: Halting Times, Stabilization Times, and
Normal Form Theorems," {\bf Theoretical Computer Science}, v. 410
(2009), p. 426-442

\end {thebibliography}

\end {document}